\providecommand{\U}[1]{\protect\rule{.1in}{.1in}}
\newtheorem{theorem}{Theorem}
\newtheorem{corollary}[theorem]{Corollary}
\newtheorem{lemma}[theorem]{Lemma}
\newtheorem{proposition}[theorem]{Proposition}
\newtheorem{remark}[theorem]{Remark}
\newenvironment{proof}[1][Proof]{\noindent\textbf{#1.} }{\ \rule{0.5em}{0.5em}}
\begin{document}

\title{The Correlation Numerical Range of a Matrix and Connes' Embedding
Problem\thanks{Email: \texttt{don@math.unh.edu} and
\texttt{deguang.han@ucf.edu}. This Research is supported in part by an NSF
grant}}
\author{Don Hadwin\\Department of Mathematics and Statistics\\University of New Hampahire\\Durham, NH 03824, USA
\and Deguang Han\\Department of Mathematics\\University of Central Florida\\Orlando, FL 32816, USA}
\maketitle

\begin{abstract}
We define a new numerical range of an $n\times n$ complex matrix in terms of
correlation matrices and develop some of its properties. We also define a
related numerical range that arises from Alain Connes' famous embedding problem.

\vspace{1ex}

\noindent2010 \textit{Mathematics Subject Classification.} Primary 46L10, 15A48.

\noindent\textit{Key words and phrases. Numerical Range, Correlation Numerical
Range, Connes' Embedding Problem}

\end{abstract}


\section{Introduction and Preliminaries}

In this paper we define and study a new numerical range for $n\times n$
complex matrices. This numerical range is loosely related to new
reformulations \cite{JP}, \cite{KS} of Connes' Embedding problem (CEP)
\cite{C}. We derive some of the basic properties of this new range, pose some
questions. We answer all of the questions in the $2\times2$ case. We introduce
another related numerical range with many of the same properties that is
directly related to the reformulation of CEP in \cite{JP} and further
developed in \cite{DJ}.

\bigskip

If $T$ is an operator on a Hilbert space $\mathcal{H}$, the \emph{numerical
range} $W\left(  T\right)  $ of $T$ is the set%
\[
W\left(  T\right)  =\left\{  \left\langle Tx,x\right\rangle :x\in
\mathcal{H},\left\Vert x\right\Vert =1\right\}  .
\]
Suppose $A=\left(  \alpha_{ij}\right)  $ is an $n\times n$ complex matrix. We
define $\hat{A}=\left(  \alpha_{ijI_{n}}\right)  $ acting on $H=\mathbb{C}%
^{n}\mathbb{\oplus\cdots\oplus C}^{n}$ ($n$ copies), and let
\[
E=\left\{  \frac{1}{\sqrt{n}}\left(
\begin{array}
[c]{c}%
e_{1}\\
e_{2}\\
\vdots\\
e_{n}%
\end{array}
\right)  \in H:\left\Vert e_{1}\right\Vert =\cdots=\left\Vert e_{n}\right\Vert
=1\right\}
\]
$.$ We define the \emph{correlation numerical range}, or $C$\emph{-numerical
range}, of $A$ as%
\[
W_{c}\left(  A\right)  =\left\{  \left\langle \hat{A}e,e\right\rangle :e\in
E\right\}  .
\]

The term \emph{correlation} comes from the fact that an $n\times n$
\emph{correlation matrix} \cite{LS} is a matrix $B=\left(  b_{ij}\right)
\in\mathcal{M}_{n}\left(  \mathbb{C}\right)  $ such that $B\geq0$ and
$b_{ii}=1$ for $1\leq i\leq n$. Equivalently, $B$ is an $n\times n$
correlation matrix if there are unit vectors $e_{1},\ldots,e_{n}\in
\mathbb{C}^{n}$ such that%
\[
B=\left(  \left\langle e_{i},e_{j}\right\rangle \right)  .
\]
Let $\mathcal{E}_{n}$ denote the set of $n\times n$ correlation matrices. If
$A\in\mathcal{M}_{n}\left(  C\right)  $, then $A^{T}$ denotes the transpose of
$T$. We also use $\mathcal{D}_{n}$ to denote the set of all the $n\times n$
diagonal matrices, and $\mathcal{D}_{n, 0}$ the set of trace zero $n\times n$
diagonal matrices.

\section{Basic Results}

We first prove some of the basic properties of $W_{c}\left(  T\right)  $.

\begin{theorem}
\label{basic} Suppose $A\in\mathcal{M}_{n}\left(  \mathbb{C}\right)  $. Then

\begin{enumerate}
\item $W_{c}\left(  A\right)  =\left\{  \tau_{n}(AB\right\}  :B=\left\{
\left(  b_{ij}\right)  \in\mathcal{M}_{n}\left(  \mathbb{C}\right)
,B\geq0,b_{ii}=1\text{ for }1\leq i\leq n\right\}  $.

\item $W_{c}\left(  A\right)  \subseteq W\left(  A\right)  $

\item $W_{c}\left(  A\right)  $ is convex

\item $\tau_{n}\left(  A\right)  \in W_{c}\left(  A\right)  $

\item If $D\in\mathcal{D}_{n}$, then
\[
W_{c}\left(  A+D\right)  =W_{c}\left(  A\right)  +\tau_{n}\left(  D\right)
\]

\item $W_{c}\left(  A\right)  =\left\{  \lambda\right\}  $ if and only if $A$
is diagonal and $\tau_{n}\left(  A\right)  =\lambda$

\item $W_{c}\left(  A\right)  \subseteq\mathbb{R}$ if and only if
$\operatorname{Im}A$ is diagonal and $\tau_{n}\left(  \operatorname{Im}%
A\right)  =0.$

\item If $\left\{  A_{k}\right\}  $ is a sequence in $\mathcal{M}_{n}\left(
\mathbb{C}\right)  $ and $\left\Vert A_{k}-A\right\Vert \rightarrow0$, then%
\[
W_{c}\left(  A\right)  =\left\{  \lim_{k\rightarrow\infty}\lambda_{k}%
:\lambda_{k}\in W_{c}\left(  A_{k}\right)  \text{ for }k\geq1\text{ and }%
\lim_{k\rightarrow\infty}\lambda_{k}\text{ exists}\right\}  .
\]

\item $W_{c}\left(  A^{T}\right)  =W_{c}\left(  A\right)  .$
\end{enumerate}
\end{theorem}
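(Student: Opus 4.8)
My plan is to deduce statement (9) from statement (1), rather than wrestle directly with the definition of $W_{c}$. By (1), $W_{c}(A)=\{\tau_{n}(AB):B\in\mathcal{E}_{n}\}$ and likewise $W_{c}(A^{T})=\{\tau_{n}(A^{T}B):B\in\mathcal{E}_{n}\}$, where $\mathcal{E}_{n}$ is the set of $n\times n$ correlation matrices. So it suffices to prove the set equality
\[
\{\tau_{n}(A^{T}B):B\in\mathcal{E}_{n}\}=\{\tau_{n}(AB):B\in\mathcal{E}_{n}\}.
\]

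The argument will rest on two elementary facts. First, writing $A=(\alpha_{ij})$ and $B=(b_{ij})$, a direct index computation gives $\operatorname{tr}(A^{T}B)=\sum_{i,j}\alpha_{ij}b_{ij}=\operatorname{tr}(AB^{T})$ (equivalently, $\operatorname{tr}(A^{T}B)=\operatorname{tr}((A^{T}B)^{T})=\operatorname{tr}(AB^{T})$), and hence $\tau_{n}(A^{T}B)=\tau_{n}(AB^{T})$ for every $B$. Second, the transpose map $B\mapsto B^{T}$ carries $\mathcal{E}_{n}$ bijectively onto itself: if $B\in\mathcal{E}_{n}$ then $B$ is Hermitian, so $B^{T}=\overline{B}$, which is positive semidefinite (its eigenvalues are the complex conjugates of those of $B$) and still has all diagonal entries equal to $1$; thus $B^{T}\in\mathcal{E}_{n}$, and the map is clearly an involution. (Equivalently, if $B=(\langle e_{i},e_{j}\rangle)$ with each $e_{k}$ a unit vector, then $B^{T}=(\langle\overline{e_{i}},\overline{e_{j}}\rangle)$ is again of this form.)

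Putting these together, as $B$ ranges over $\mathcal{E}_{n}$ so does $B^{T}$, so the substitution $C=B^{T}$ yields
\[
\{\tau_{n}(A^{T}B):B\in\mathcal{E}_{n}\}=\{\tau_{n}(AB^{T}):B\in\mathcal{E}_{n}\}=\{\tau_{n}(AC):C\in\mathcal{E}_{n}\}=W_{c}(A),
\]
which is exactly what I want. As a sanity check, one can also argue straight from the definition: for $e=\tfrac{1}{\sqrt{n}}(e_{1},\dots,e_{n})\in E$, replacing the tuple $(e_{1},\dots,e_{n})$ by $(\overline{e_{1}},\dots,\overline{e_{n}})$ shows $\langle\widehat{A^{T}}e,e\rangle\in W_{c}(A)$, giving $W_{c}(A^{T})\subseteq W_{c}(A)$, and the reverse inclusion follows by applying this to $A^{T}$ in place of $A$. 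I do not anticipate a genuine obstacle here; the only points needing care are checking that the trace identity is purely formal (no complex conjugates sneak in) and that $\mathcal{E}_{n}$ is stable under transposition. Passing through part (1) is precisely what keeps the sesquilinear bookkeeping from becoming a distraction.
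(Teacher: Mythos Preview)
Your argument is correct and is essentially the paper's own proof of (9): the paper also deduces it from (1) together with $\mathcal{E}_{n}=\{B^{T}:B\in\mathcal{E}_{n}\}$ and the transpose-invariance of the trace $\tau_{n}(S^{T})=\tau_{n}(S)$, which is exactly your identity $\tau_{n}(A^{T}B)=\tau_{n}(AB^{T})$ in disguise. You have simply filled in the details (and added a redundant but harmless sanity check via the definition).
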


\begin{proof}
$\left(  1\right)  $. This is a direct calculation,

$\left(  2\right)  $. This follows from the definition of $W_{c}\left(
A\right)  $ and the fact that $W\left(  A\right)  =W\left(  \hat{A}\right)  $

$\left(  3\right)  .$ This follows from $\left(  1\right)  $.

$\left(  4\right)  $. Choose a vector in $E$ with $\left\{  e_{1},\ldots
,e_{n}\right\}  $ an orthonormal basis for $\mathbb{C}^{n}$.

$\left(  5\right)  $ This is a direct computation.

$\left(  6\right)  $. The "if" part follows from $\left(  5\right)  $. For the
other direction, suppose $W_{c}\left(  A\right)  =\left\{  \lambda\right\}  $.
We know from $\left(  4\right)  $ that $\lambda=\tau_{n}\left(  A\right)  $.
Suppose $i\neq j$. Choose unit vectors $\left\{  e_{1},\ldots,e_{n}\right\}  $
in $\mathbb{C}^{n}$ so that $\left\{  e_{k}:i\neq k\neq j\right\}  $ is
orthonormal and orthogonal to $\left\{  e_{i},e_{j}\right\}  $ and such that
$e_{j}=\beta$. We have that
\[
\lambda=\left\langle \hat{A}e,e\right\rangle =\tau_{n}\left(  A\right)
+\alpha_{ij}\bar{\beta}+\alpha_{ji}\beta.
\]
Since $\beta\in\mathbb{C}$ with $\left\vert \beta\right\vert =1$ it follows
that $\alpha_{ij}=0$.

$\left(  7\right)  $. This follows from $\left(  6\right)  $ and the obvious
fact that%
\[
\operatorname{Re}W_{c}\left(  A\right)  =W_{c}\left(  \operatorname{Re}%
A\right)  \text{ and }\operatorname{Im}W_{c}\left(  A\right)  =W_{c}\left(
\operatorname{Im}A\right)  .
\]

$\left(  8\right)  $. This is an easy consequence of the compactness of the
set $\mathcal{E}_{n}$ of $n\times n$ correlation matrices.

$\left(  9\right)  $. This follows from $\left(  1\right)  $, the fact that
$\mathcal{E}_{n}=\left\{  B^{T}:B\in\mathcal{E}_{n}\right\}  ,$ and the fact
that $\tau_{n}\left(  S^{T}\right)  =\tau_{n}\left(  S\right)  $ for every
$S\in\mathcal{M}_{n}\left(  \mathbb{C}\right)  $.
\end{proof}

\bigskip

Here is a fundamental problem in this paper.

\bigskip

\textbf{Problem 1:} What is a necessary and sufficient condition for
$W_{c}\left(  A\right)  \subseteq\lbrack0,\infty)$ or $\left(  0,\infty
\right)  $? In particular, is it true that $W_{c}\left(  A\right)
\subseteq\lbrack0,\infty)$ if an only if $A$ is the sum of a trace-zero
diagonal operator and a positive semidefinite operator?

\bigskip

Note that if $A$ is a limit of matrices of the form "positive semidefinite +
zero-trace diagonal", then $W_{c}\left(  A\right)  \subseteq\lbrack0.\infty)$.
However, this set of matrices is norm closed.

\bigskip

\begin{lemma}
Suppose $A\in\mathcal{M}_{n}\left(  \mathbb{C}\right)  $. The following are equivalent.

\begin{enumerate}
\item $A$ is the sum of a positive semidefinite matrix and a trace-zero
diagonal matrix

\item There is a sequence $\left\{  A_{k}\right\}  $ of positive semidefinite
operators and a sequence $\left\{  D_{k}\right\}  $ of trace-zero diagonal
operators such that $\left\Vert A_{k}+D_{k}-A\right\Vert \rightarrow0$.
\end{enumerate}
\end{lemma}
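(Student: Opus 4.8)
The implication $(1)\Rightarrow(2)$ is trivial: if $A=P+D$ with $P\geq 0$ and $D\in\mathcal{D}_{n,0}$, take the constant sequences $A_k=P$, $D_k=D$. The real content is $(2)\Rightarrow(1)$, i.e.\ that the set $\mathcal{S}=\{P+D:\ P\in\mathcal{M}_n(\mathbb{C}),\ P\geq 0,\ D\in\mathcal{D}_{n,0}\}$ is norm closed. The plan is this: given $A_k\geq 0$, $D_k\in\mathcal{D}_{n,0}$ with $\|A_k+D_k-A\|\to 0$, I would first show that the sequences $\{A_k\}$ and $\{D_k\}$ are both bounded in $\mathcal{M}_n(\mathbb{C})$; then, since $\mathcal{M}_n(\mathbb{C})$ is finite-dimensional, pass to a subsequence along which $A_k\to P$ and $D_k\to D$; finally note that the positive semidefinite cone and the subspace $\mathcal{D}_{n,0}$ are both closed, so $P\geq 0$, $D\in\mathcal{D}_{n,0}$, and $A=\lim(A_k+D_k)=P+D$.

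The crux is the boundedness, which comes from playing positivity against the trace constraint. Write $A_k=(a^{(k)}_{ij})$ and $D_k=\mathrm{diag}(d^{(k)}_1,\dots,d^{(k)}_n)$ with $\sum_i d^{(k)}_i=0$. Convergence of $A_k+D_k$ makes the off-diagonal entries $a^{(k)}_{ij}$ ($i\neq j$) and the diagonal sums $a^{(k)}_{ii}+d^{(k)}_i$ bounded. I would then argue that each diagonal sequence $\{a^{(k)}_{ii}\}_k$ is bounded: if some $\{a^{(k)}_{i_0i_0}\}_k$ were unbounded, pass to a subsequence along which $a^{(k)}_{i_0i_0}\to\infty$; then $d^{(k)}_{i_0}=(a^{(k)}_{i_0i_0}+d^{(k)}_{i_0})-a^{(k)}_{i_0i_0}\to-\infty$, so by $\sum_i d^{(k)}_i=0$ some coordinate $d^{(k)}_{j_0}$ with $j_0\neq i_0$ tends to $+\infty$ along a further subsequence, forcing $a^{(k)}_{j_0j_0}=(a^{(k)}_{j_0j_0}+d^{(k)}_{j_0})-d^{(k)}_{j_0}\to-\infty$ and contradicting $a^{(k)}_{j_0j_0}\geq 0$ (diagonal entries of a positive semidefinite matrix are nonnegative). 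Once the diagonals are bounded, the $2\times 2$ principal minor inequality $|a^{(k)}_{ij}|^2\leq a^{(k)}_{ii}a^{(k)}_{jj}$, valid since $A_k\geq 0$, bounds every off-diagonal entry as well, so $\{A_k\}$ is bounded; then $\{D_k\}=\{(A_k+D_k)-A_k\}$ is bounded as a difference of bounded sequences.

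I expect no serious obstacle beyond careful bookkeeping of the nested subsequences in the boundedness step, and that is cleanly avoided by phrasing it as a single proof by contradiction: assume $\limsup_k\max_i a^{(k)}_{ii}=\infty$ and extract one subsequence realizing the limit. Since everything takes place in finite dimensions, no functional-analytic subtleties are needed.
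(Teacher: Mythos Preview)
Your proof is correct and follows the same compactness strategy as the paper: bound the sequences, extract a convergent subsequence, and use that the positive cone and $\mathcal{D}_{n,0}$ are closed. The difference lies in the boundedness step. You argue entrywise by contradiction, chasing unbounded diagonal entries through the trace-zero constraint. The paper instead observes in one line that $\mathrm{Tr}(A_k)=\mathrm{Tr}(A_k+D_k)\to\mathrm{Tr}(A)$ (since $\mathrm{Tr}(D_k)=0$), and then uses the standard fact $\|A_k\|\leq\mathrm{Tr}(A_k)$ for positive semidefinite matrices to bound $\{A_k\}$ immediately; $\{D_k\}$ is then bounded as a difference, exactly as you do. Your approach is self-contained and avoids invoking the norm-trace inequality, but the paper's trace trick is shorter and sidesteps the nested-subsequence bookkeeping entirely.
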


\begin{proof}
The implication $\left(  1\right)  \Longrightarrow\left(  2\right)  $ is
obvious. Suppose $\left(  2\right)  $ is true. Then
\[
Tr\left(  A_{k}\right)  =Tr\left(  A_{k}+D_{k}\right)  \rightarrow Tr(A).
\]
Since $0\leq A_{k}$, we have $\left\Vert A_{k}\right\Vert \leq Tr(A_{k})$.
Hence there is a subsequence $\left\{  A_{k_{j}}\right\}  $ that converges to
some $P\geq0$. Also%
\[
D_{k_{j}}=\left(  A_{k_{j}}+D_{k_{j}}\right)  -A_{k_{j}}\rightarrow A-P=D
\]
for some zero-trace diagonal operator $D$. Hence $A=P+D$ shows that $\left(
1\right)  $ is true.
\end{proof}

\bigskip

It is easily shown that $W\left(  S\oplus T\right)  $ is the convex hull of
$W\left(  S\right)  \cup W\left(  T\right)  $. Here is the analogue for
$W_{c}$.

\bigskip

\begin{lemma}
\label{ds}Suppose $A\in\mathcal{M}_{n}\left(  \mathbb{C}\right)  $ and
$A=\left(
\begin{array}
[c]{cc}%
S_{1} & 0\\
0 & S_{2}%
\end{array}
\right)  $ where $S_{j}\in\mathcal{M}_{k_{j}}\left(  \mathbb{C}\right)  $ for
$k=1,2$. Then%
\[
W_{c}\left(  A\right)  =\frac{k_{1}}{n}W_{c}\left(  S_{1}\right)  +
\frac{k_{2}}{n}W_{c}\left(  S_{2}\right)  .
\]
\bigskip
\end{lemma}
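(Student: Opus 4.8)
The plan is to work entirely with the trace description in part~(1) of Theorem~\ref{basic}, namely $W_{c}(A)=\{\tau_{n}(AB):B\in\mathcal{E}_{n}\}$, and to exploit the block structure of $A$. Write $n=k_{1}+k_{2}$ and, for an arbitrary $B\in\mathcal{E}_{n}$, decompose it conformally with $A$ as
\[
B=\left(
\begin{array}[c]{cc}
B_{11} & B_{12}\\
B_{21} & B_{22}
\end{array}
\right),\qquad B_{jj}\in\mathcal{M}_{k_{j}}\left(\mathbb{C}\right).
\]
The first observation I would record is that each diagonal block $B_{jj}$ is itself a correlation matrix: it is a principal submatrix of the positive semidefinite matrix $B$, hence positive semidefinite, and its diagonal entries are among those of $B$, hence all equal to $1$. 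Thus $B_{jj}\in\mathcal{E}_{k_{j}}$.

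For the inclusion $W_{c}(A)\subseteq\frac{k_{1}}{n}W_{c}(S_{1})+\frac{k_{2}}{n}W_{c}(S_{2})$, I would compute $AB$ using that $A$ is block diagonal: its diagonal blocks are $S_{1}B_{11}$ and $S_{2}B_{22}$, and only these contribute to the trace. Hence
\[
\tau_{n}\left(AB\right)=\frac{1}{n}\bigl(\mathrm{Tr}\left(S_{1}B_{11}\right)+\mathrm{Tr}\left(S_{2}B_{22}\right)\bigr)=\frac{k_{1}}{n}\tau_{k_{1}}\left(S_{1}B_{11}\right)+\frac{k_{2}}{n}\tau_{k_{2}}\left(S_{2}B_{22}\right),
\]
which lies in $\frac{k_{1}}{n}W_{c}(S_{1})+\frac{k_{2}}{n}W_{c}(S_{2})$ by part~(1) applied to $S_{1}$ and $S_{2}$. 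For the reverse inclusion, given $\lambda_{j}\in W_{c}(S_{j})$, I would choose $B_{j}\in\mathcal{E}_{k_{j}}$ with $\tau_{k_{j}}(S_{j}B_{j})=\lambda_{j}$ (again by part~(1)) and set $B=\left(
\begin{array}[c]{cc}
B_{1} & 0\\
0 & B_{2}
\end{array}
\right)$; this $B$ is positive semidefinite with unit diagonal, so $B\in\mathcal{E}_{n}$, and the displayed computation gives $\tau_{n}(AB)=\frac{k_{1}}{n}\lambda_{1}+\frac{k_{2}}{n}\lambda_{2}$. Hence every element of the Minkowski sum is attained.

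I do not expect a genuine obstacle here; the only points needing care are the bookkeeping among the three normalized traces $\tau_{n},\tau_{k_{1}},\tau_{k_{2}}$ (the factors $k_{j}/n$ arising precisely from rewriting $\tfrac{1}{n}\mathrm{Tr}$ on a $k_{j}\times k_{j}$ block as $\tfrac{k_{j}}{n}\tau_{k_{j}}$) and the elementary fact, noted above, that a principal submatrix of a correlation matrix is a correlation matrix. If desired, an easy induction on the number of summands then extends the identity to any finite block-diagonal decomposition, giving the full analogue of the convex-hull formula for $W(S\oplus T)$ mentioned before the statement.
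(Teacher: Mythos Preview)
Your argument is correct. The paper actually states Lemma~\ref{ds} without proof, so there is nothing to compare against; your approach via part~(1) of Theorem~\ref{basic} and conformal block decomposition is exactly the natural one and would serve as the omitted proof.
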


It follows from Theorem \ref{basic} that in $W_{c}$ is actually a function on
$\mathcal{M}_{n}\left(  \mathbb{C}\right)  /\mathcal{D}_{n,0}$, i.e., in
comparing $W_{c}\left(  T\right)  $ with $W\left(  T\right)  $ we see that
$\mathcal{D}_{n}$ plays the role of $\mathbb{C}I_{n}$ and $\mathcal{D}_{n,0}$
plays the role of $0$. Here are some more examples.

It is true that $W\left(  U^{\ast}TU\right)  =W\left(  T\right)  $ for every
operator $T$ and every unitary operator $U$. It is known that, for every
$\ast$-automorphism $\alpha$ of $\mathcal{M}_{n}\left(  \mathbb{C}\right)  ,$
there is a unitary matrix $U$ such that $\alpha=ad_{U}$, i.e.,%
\[
\alpha\left(  T\right)  =U^{\ast}TU
\]
for every $T\in\mathcal{M}_{n}\left(  \mathbb{C}\right)  $. Let $\mathcal{G}%
_{n}$ be the group of unitary matrices generated by the diagonal unitaries and
the permutation matrices.

\bigskip

\begin{proposition}
\label{normalizer}Suppose $U\in\mathcal{M}_{n}\left(  \mathbb{C}\right)  $
unitary. The following are equivalent.

\begin{enumerate}
\item $ad_{U}\left(  \mathcal{D}_{n}\right)  \subseteq\mathcal{D}_{n}$

\item $ad_{U}\left(  \mathcal{D}_{n,0}\right)  \subseteq\mathcal{D}_{n,0}$

\item $W_{c}\left(  U^{\ast}AU\right)  =W_{c}\left(  A\right)  $ for every
$A\in\mathcal{M}_{n}\left(  \mathbb{C}\right)  $

\item $U\in\mathcal{G}_{n}$
\end{enumerate}
\end{proposition}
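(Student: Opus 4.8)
The plan is to prove the cycle $(4)\Rightarrow(1)\Rightarrow(2)\Rightarrow(3)\Rightarrow(4)$, or rather to establish the easy implications first and concentrate on the hard one. The implications $(4)\Rightarrow(1)$ and $(1)\Leftrightarrow(2)$ are essentially immediate: a diagonal unitary clearly conjugates $\mathcal D_n$ onto itself, a permutation matrix permutes the diagonal entries hence also preserves $\mathcal D_n$, and products of such matrices do too, giving $(4)\Rightarrow(1)$; and $(1)\Leftrightarrow(2)$ follows because $ad_U$ is trace-preserving, so it carries $\mathcal D_n$ into $\mathcal D_n$ iff it carries the trace-zero part into the trace-zero part (one direction is trivial, the other uses that $\mathcal D_n = \mathcal D_{n,0} + \mathbb C I_n$ and $ad_U(I_n)=I_n$). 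The implication $(1)\Rightarrow(3)$ is where Theorem~\ref{basic} does the work: by part~(1) of that theorem, $W_c(A)=\{\tau_n(AB):B\in\mathcal E_n\}$, and $\tau_n((U^\ast AU)B)=\tau_n(A\,UBU^\ast)$, so it suffices to know that $B\mapsto UBU^\ast$ maps $\mathcal E_n$ bijectively onto $\mathcal E_n$; since $UBU^\ast\ge 0$ whenever $B\ge 0$, this reduces exactly to the requirement that conjugation by $U$ preserves diagonal entries equal to $1$, i.e. preserves $\mathcal D_n$ — which is condition~(1). Actually, I should be a little careful: preserving $\mathcal E_n$ requires preserving the \emph{affine} condition ``all diagonal entries $=1$'', and one checks this is equivalent to $ad_{U^\ast}(\mathcal D_n)\subseteq\mathcal D_n$, which by applying the argument to $U^\ast$ is equivalent to (1); I will spell this symmetry out.

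The substantive implication is $(3)\Rightarrow(4)$, equivalently $(1)\Rightarrow(4)$ once the above is in place, and this is the structural fact that the \emph{normalizer} of the diagonal algebra $\mathcal D_n$ in the unitary group is exactly $\mathcal G_n$, the group generated by diagonal unitaries and permutation matrices — which explains the label on the proposition. The plan here is: suppose $ad_U(\mathcal D_n)\subseteq\mathcal D_n$ (and hence, by dimension count / invertibility, $ad_U(\mathcal D_n)=\mathcal D_n$). Then $ad_U$ restricts to a $\ast$-algebra automorphism of $\mathcal D_n\cong\mathbb C^n$; every automorphism of $\mathbb C^n$ as a $\ast$-algebra is given by a permutation of the $n$ coordinates. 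So there is a permutation $\sigma$ with $U E_{ii} U^\ast = E_{\sigma(i)\sigma(i)}$, where $E_{ii}$ are the diagonal matrix units. Letting $P_\sigma$ be the corresponding permutation matrix, the unitary $V = P_\sigma^\ast U$ then satisfies $V E_{ii} V^\ast = E_{ii}$ for all $i$, i.e. $V$ commutes with every diagonal matrix, hence $V$ lies in the commutant of $\mathcal D_n$, which is $\mathcal D_n$ itself; a diagonal unitary. Thus $U = P_\sigma V$ is a product of a permutation matrix and a diagonal unitary, so $U\in\mathcal G_n$.

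The main obstacle — really the only place needing care — is the step identifying automorphisms of the diagonal algebra with permutations, and making sure the argument is genuinely about the \emph{abelian subalgebra} $\mathcal D_n$ rather than about $\mathcal M_n$ globally: one must observe that $ad_U|_{\mathcal D_n}$ is a unital $\ast$-algebra automorphism of the commutative $C^\ast$-algebra $\mathcal D_n$, invoke that such automorphisms correspond to homeomorphisms of the (finite, discrete) spectrum, i.e. to permutations of $\{1,\dots,n\}$, and then use the double commutant fact $\mathcal D_n' = \mathcal D_n$ in $\mathcal M_n(\mathbb C)$ to pin down $V$. A small additional point to dispatch is that $ad_U(\mathcal D_n)\subseteq\mathcal D_n$ forces equality: $ad_U$ is injective and $\mathcal D_n$ is finite-dimensional, so the inclusion is automatically onto. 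I would also remark that the equivalence $(2)\Rightarrow(1)$ needs the observation that $I_n\in\mathcal D_n$ together with $\dim\mathcal D_n = \dim\mathcal D_{n,0}+1$, so that preserving the codimension-one subspace $\mathcal D_{n,0}$ plus fixing $I_n$ recovers preservation of all of $\mathcal D_n$. None of these steps is long, and the whole proposition is really the statement that ``$W_c$-invariance $=$ being in the normalizer of the Cartan subalgebra.''
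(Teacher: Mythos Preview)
Your argument for the individual implications matches the paper's approach almost verbatim: the equivalence $(1)\Leftrightarrow(2)$ via trace-preservation and $ad_U(I)=I$; the normalizer argument for $(1)\Rightarrow(4)$ (automorphism of $\mathcal D_n\cong C(\{1,\dots,n\})$ is a permutation, then the leftover factor lies in $\mathcal D_n'=\mathcal D_n$); and the use of $\tau_n\bigl((U^\ast AU)B\bigr)=\tau_n(A\,UBU^\ast)$ together with $U\mathcal E_nU^\ast=\mathcal E_n$ to obtain invariance of $W_c$. All of this is fine.

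However, there is a genuine logical gap in your cycle. You establish $(4)\Rightarrow(1)\Leftrightarrow(2)$, $(1)\Rightarrow(3)$, and $(1)\Rightarrow(4)$; this gives $(1)\Leftrightarrow(2)\Leftrightarrow(4)$ and that each of these implies $(3)$, but you never prove that $(3)$ implies any of the others. Your sentence ``$(3)\Rightarrow(4)$, equivalently $(1)\Rightarrow(4)$ once the above is in place'' is where the slip occurs: that equivalence would require $(3)\Rightarrow(1)$, which you have not shown. Proving $(1)\Rightarrow(4)$ does not, by itself, yield $(3)\Rightarrow(4)$.

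The paper closes this gap with a one-line appeal to Theorem~\ref{basic}(6): $\mathcal D_n$ is exactly the set of matrices $T$ for which $W_c(T)$ is a singleton. So if $W_c(U^\ast AU)=W_c(A)$ for every $A$, then taking $A=D\in\mathcal D_n$ gives $W_c(U^\ast DU)=W_c(D)=\{\tau_n(D)\}$, whence $U^\ast DU\in\mathcal D_n$. That is $(3)\Rightarrow(1)$, and your proof is then complete. You should insert this step explicitly.
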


\begin{proof}
$\left(  1\right)  \Leftrightarrow\left(  2\right)  $. This is obvious since
$ad_{U}$ always preserve the trace and $ad_{U}\left(  I\right)  =I$

$\left(  3\right)  \Longrightarrow\left(  1\right)  $. This follows from the
fact that $\mathcal{D}_{n}$ is the set of all $T\in\mathcal{M}_{n}$ such that
$W_{c}\left(  T\right)  $ is a singleton.

$\left(  1\right)  \Longrightarrow\left(  4\right)  $. This is well-known.
Since $ad_{U}$ is a automorphism of $\mathcal{D}_{n}$ and $\mathcal{D}_{n}$ is
$\ast$-isomorphic to $C\left(  \left\{  1,2,\ldots,n\right\}  \right)  $, and
since every automorphism on $C\left(  \left\{  1,2,\ldots,n\right\}  \right)
$ is composition with a homeomorphism on $\left\{  1,2,\ldots,n\right\}  $,
there is a unitary permutation matrix $W\in\mathcal{M}_{n}\left(
\mathbb{C}\right)  $ such that
\[
ad_{U}\left(  D\right)  =ad_{W}\left(  D\right)
\]
for every $D\in\mathcal{D}_{n}$. Hence $UW^{\ast}$ commutes with every
diagonal matrix, i.e., $V=UW^{\ast}$ is a diagonal unitary matrix, so
$U=VW\in\mathcal{G}_{n}$.

$\left(  4\right)  \Longrightarrow\left(  3\right)  .$ It is easily seen that
if $U\in\mathcal{G}_{n}$ and $B\in\mathcal{M}_{n}\left(  \mathbb{C}\right)  $,
then $B$ is a correlation matrix if and only if $UBU^{\ast}$ is a correlation
matrix. Moreover,
\[
\tau_{n}\left(  ad_{U}\left(  A\right)  B\right)  =\tau_{n}\left(  AUBU^{\ast
}\right)  ,
\]
so it follows from part $\left(  1\right)  $ of theorem \ref{basic} that
$W_{c}\left(  ad_{U}\left(  A\right)  \right)  =W_{c}\left(  A\right)  $
whenever $U\in\mathcal{G}_{n}$.
\end{proof}

\bigskip

\bigskip

\section{The Correlation Numerical Radius}

The numerical radius $w\left(  T\right)  $ is defined by%
\[
w\left(  T\right)  =\sup\left\{  \left\vert \lambda\right\vert :\lambda\in
W\left(  T\right)  \right\}  .
\]
\bigskip

We define the $C$-numerical radius as
\[
w_{c}\left(  A\right)  =\max\left\{  \left\vert \lambda\right\vert :\lambda\in
W_{c}\left(  A\right)  \right\}  .
\]
\bigskip

A classical result is that
\[
\left\Vert T\right\Vert /2\leq w\left(  T\right)  \leq\left\Vert T\right\Vert
.
\]
Define
\[
\left\Vert T\right\Vert _{c}=\inf_{D\in\mathcal{D}_{n,0}}\left\Vert
T-D\right\Vert ,
\]
which is the norm of the image of $T$ in $\mathcal{M}_{n}\left(
\mathbb{C}\right)  /\mathcal{D}_{n,0}$.

\begin{proposition}
Suppose $n\in\mathbb{N}$. There is a number $\kappa_{n}>0$ such that
\[
\kappa_{n}\left\Vert T\right\Vert _{c}\leq w_{c}\left(  T\right)
\leq\left\Vert T\right\Vert _{c}%
\]
for every $T\in\mathcal{M}_{n}\left(  \mathbb{C}\right)  $. Moreover, when
$n\geq2,$%
\[
\frac{1}{4n+2}\leq\kappa_{n}\leq\frac{2}{n}.
\]

\end{proposition}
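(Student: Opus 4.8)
The plan is to prove three things, in order: the right‑hand inequality $w_c(T)\le\|T\|_c$ (immediate from Theorem \ref{basic}), the bound $\kappa_n\le 2/n$ (an explicit example), and the left‑hand inequality with $\kappa_n=\tfrac{1}{4n+2}$, which is where essentially all the content lies. For the easy inequality: by part (5) of Theorem \ref{basic}, $W_c(T-D)=W_c(T)$ for every $D\in\mathcal D_{n,0}$, so $w_c(T)=w_c(T-D)$, and combining with part (2), $w_c(T)=w_c(T-D)\le w(T-D)\le\|T-D\|$; taking the infimum over $D\in\mathcal D_{n,0}$ gives $w_c(T)\le\|T\|_c$. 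For $\kappa_n\le 2/n$ (with $n\ge2$), I would take $T=E_{1n}$, the matrix with a $1$ in the $(1,n)$ entry and zeros elsewhere. Since the $(1,n)$ entry of $T-D$ equals $1$ for every diagonal $D$, one has $\|T\|_c=\|E_{1n}\|=1$; and $\operatorname{Tr}(E_{1n}B)=B_{n1}$, while the set of $(n,1)$ entries of correlation matrices is exactly the closed unit disk, so $W_c(T)$ is the disk of radius $1/n$ and $w_c(T)=1/n$. Hence $w_c(T)/\|T\|_c=1/n\le 2/n$. (Alternatively, applying Lemma \ref{ds} to the direct sum of $\bigl(\begin{smallmatrix}0&1\\-1&0\end{smallmatrix}\bigr)$ with a zero block gives the ratio exactly $2/n$.)

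For the lower bound, write $T=T_0+D_T$ with $T_0$ the off‑diagonal part and $D_T$ the diagonal part, and set $a=\tau_n(T)$. Since $D_T-aI\in\mathcal D_{n,0}$, the matrix $T_0+aI$ represents the same coset, so $\|T\|_c\le\|T_0+aI\|\le\|T_0\|+|a|$. From part (4) of Theorem \ref{basic}, $|a|=|\tau_n(T)|\le w_c(T)$. Writing $T=(T_0+aI)+(D_T-aI)$ and applying part (5) twice (first with the trace‑zero diagonal $D_T-aI$, then with the diagonal $aI$) gives $W_c(T)=W_c(T_0+aI)=W_c(T_0)+a$; since $0=\tau_n(T_0)\in W_c(T_0)$, choosing $\lambda_0\in W_c(T_0)$ with $|\lambda_0|=w_c(T_0)$ yields $w_c(T)\ge|\lambda_0+a|\ge w_c(T_0)-|a|$, and together with $w_c(T)\ge|a|$ this gives $w_c(T)\ge\tfrac12 w_c(T_0)$.

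It then remains to prove the key estimate
\[
w_c(T_0)\ \ge\ \frac{1}{2n}\,\|T_0\|\qquad\text{for every off-diagonal }T_0 .
\]
For this I would feed into $W_c$ the correlation matrices $B_v=\operatorname{diag}(1-|v_1|^2,\dots,1-|v_n|^2)+vv^{*}$ for $v\in\mathbb C^n$ with $\|v\|_2\le1$: each $B_v$ is a sum of two positive semidefinite matrices and has all diagonal entries $1$, hence $B_v\in\mathcal E_n$; and because $T_0$ has zero diagonal, $\operatorname{Tr}(T_0B_v)=\operatorname{Tr}(T_0vv^{*})=\langle T_0v,v\rangle$, the diagonal summand contributing nothing to the trace. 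Thus $\tfrac1n\langle T_0v,v\rangle\in W_c(T_0)$ for all such $v$, so $w_c(T_0)\ge\tfrac1n\sup_{\|v\|_2\le1}|\langle T_0v,v\rangle|=\tfrac1n\,w(T_0)\ge\tfrac1{2n}\|T_0\|$, the last step being the classical numerical‑radius bound $w(\cdot)\ge\tfrac12\|\cdot\|$. Assembling everything: $\|T_0\|\le 2n\,w_c(T_0)\le 4n\,w_c(T)$ and $|a|\le w_c(T)$, so $\|T\|_c\le\|T_0\|+|a|\le(4n+1)\,w_c(T)$, giving $\kappa_n\ge\tfrac1{4n+1}\ge\tfrac1{4n+2}$.

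The step I expect to be the real obstacle, and the one carrying the whole argument, is the key estimate $w_c(T_0)\ge\tfrac1{2n}\|T_0\|$: the point is to locate a family of correlation matrices rich enough to witness a large quadratic form while the normalization $b_{ii}=1$ remains harmless against a diagonal‑free $T_0$, so that the ordinary numerical‑radius inequality can be imported — the rank‑one‑plus‑diagonal matrices $B_v$ accomplish exactly this. Everything else is bookkeeping with the properties already recorded in Theorem \ref{basic} and Lemma \ref{ds}.
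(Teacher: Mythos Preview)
Your argument is correct. In fact, for the upper bound on $\kappa_n$ your example $T=E_{1n}$ gives $w_c(T)/\|T\|_c=1/n$, which is sharper than the paper's stated $2/n$; and for the lower bound your constant $1/(4n+1)$ is slightly better than the paper's $1/(4n+2)$.

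The route you take for the lower bound is genuinely different from the paper's. The paper first restricts to selfadjoint zero-diagonal $T$, invokes Rosenoer's distance estimate \cite{R} to produce a diagonal projection $P$ with $\mathrm{dist}(T,\mathcal D_n)\le 2\|PT-TP\|$, and then tests $W_c(T)$ against correlation matrices of the form $I+X+X^{*}$ with $X=(1-P)XP$, $\|X\|\le1$, to bound $w_c(T)$ below by a trace-class norm of the off-diagonal block $(1-P)TP$; two further reductions (from zero-diagonal selfadjoint to arbitrary selfadjoint, then to general $T$ via real and imaginary parts) each cost a factor of roughly $2$. Your approach bypasses Rosenoer entirely: the family $B_v=\mathrm{diag}(1-|v_i|^2)+vv^{*}$ lets you read off $\tfrac1n\langle T_0v,v\rangle\in W_c(T_0)$ directly and import the classical inequality $w(\cdot)\ge\tfrac12\|\cdot\|$, after which a single diagonal/off-diagonal split and an elementary triangle-inequality balancing finish the job. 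What the paper's argument buys is a connection to the operator-algebraic distance formula and the trace-class duality picture; what yours buys is self-containment, a shorter chain of reductions, and marginally better constants.
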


\begin{proof}
We know that $\left\Vert {}\right\Vert _{c}$ and $w_{c}$ are seminorms on
$\mathcal{M}_{n}\left(  \mathbb{C}\right)  $ that are $0$ exactly on
$\mathcal{D}_{n,0}$. Since $\mathcal{M}_{n}\left(  \mathbb{C}\right)  $ is
finite-dimension, these seminorms are equivalent and the existence of
$\kappa_{n}$ is proved. Moreover, for any $T\in\mathcal{M}_{n}\left(
\mathbb{C}\right)  $ and any $D\in\mathcal{D}_{n,0}$, we have
\[
w_{c}\left(  T\right)  =w_{c}\left(  T-D\right)  \leq\left\Vert T-D\right\Vert
.
\]
Hence $w_{c}\left(  T\right)  \leq\left\Vert T\right\Vert _{c}$.

Suppose $n\geq2$ and let $A\in\mathcal{M}_{n}\left(  \mathbb{C}\right)  $ be
the direct sum of $\left(
\begin{array}
[c]{cc}%
0 & 1\\
0 & 0
\end{array}
\right)  $ with an $\left(  n-2\right)  \times\left(  n-2\right)  $ zero
matrix. It follows from Lemma \ref{ds} that $w_{c}\left(  A\right)  =\frac
{2}{n}$ and $\left\Vert T\right\Vert _{c}=1.$ Thus $\kappa_{n}\leq2/n$.

Suppose $T=T^{\ast}$ is a matrix all of whose diagonal entries are $0.$ It
follows from \cite{R} that there is a diagonal projection matrix $P$ such that%
\[
dist\left(  T,\mathcal{D}_{n}\right)  \leq2\left\Vert TP-PT\right\Vert .
\]
Let $\left\Vert {}\right\Vert _{1}$ denote the trace-class norm on
$\mathcal{M}_{n}\left(  \mathbb{C}\right)  $, i.e., $\left\Vert S\right\Vert
_{1}=Tr\left(  \left(  S^{\ast}S\right)  ^{\frac{1}{2}}\right)  $. Suppose
$X\in\mathcal{M}_{n}\left(  \mathbb{C}\right)  $ and $X=\left(  1-P\right)
XP$ and $\left\Vert X\right\Vert \leq1$. Then $X+X^{\ast}$ is a selfadjoint
zero-diagonal contraction, so $P+X+X^{\ast}=B=\left(  b_{ij}\right)  $ is a
correlation matrix. Moreover, if we write $T=\left(  t_{ij}\right)  $ and
$\left(  1-P\right)  TP=\left(  s_{ij}\right)  $
\[
Tr\left(  TB^{T}\right)  =%
{\displaystyle\sum_{i\neq j}}
t_{ij}b_{is}=2\operatorname{Re}%
{\displaystyle\sum_{{}}}
s_{ij}x_{ij},
\]
so we get%
\[
w_{c}\left(  T\right)  \geq\left\vert \tau_{n}\left(  TB^{T}\right)
\right\vert \geq\frac{2}{n}\left\vert
{\displaystyle\sum_{{}}}
s_{ij}x_{ij}\right\vert .
\]
Taking the supremum over all $X$ we obtain%
\[
w_{c}\left(  T\right)  \geq\frac{2}{n}\left\Vert \left(  1-P\right)
TP\right\Vert _{1}\geq\frac{2}{n}\left\Vert \left(  1-P\right)  TP\right\Vert
\geq\frac{2}{n}dist\left(  T,\mathcal{D}_{n}\right)  .
\]
But if we choose $D\in\mathcal{D}_{n}$ such that $\left\Vert T-D\right\Vert
=dist\left(  T,\mathcal{D}_{n}\right)  ,$ we have that
\[
dist\left(  D,\mathcal{D}_{n,0}\right)  =\left\vert \tau_{n}\left(  D\right)
\right\vert =\left\vert \tau_{n}\left(  T-D\right)  \right\vert \leq\frac
{n}{2}w_{c}\left(  T\right)  .
\]
Hence,
\[
\left\Vert T\right\Vert _{c}\leq\left\Vert T-D\right\Vert +dist\left(
D,\mathcal{D}_{n,0}\right)  \leq nw_{c}\left(  T\right)  .
\]

For an arbitrary $T=T^{\ast}$, we can write $T=T_{1}+D_{1}$ with $T_{1}%
=T_{1}^{\ast}$ a zero-diagonal matrix and $D_{1}\in\mathcal{D}_{n}$. We have%
\[
\left\Vert T\right\Vert _{c}\leq\left\Vert T_{1}\right\Vert _{c}+\left\Vert
D_{1}\right\Vert _{c}=\left\Vert T_{1}\right\Vert _{c}+\left\vert \tau
_{n}\left(  T\right)  \right\vert \leq
\]%
\[
nw_{c}\left(  T_{1}\right)  +w_{c}\left(  T\right)  \leq n\left[  w_{c}\left(
D_{1}\right)  +w_{c}\left(  T\right)  \right]  +w_{c}\left(  T\right)
\leq\left(  2n+1\right)  w_{c}\left(  T\right)  .
\]
For the general case, we know from the fact that $w_{c}\left(  T^{\ast
}\right)  =w_{c}\left(  T\right)  $ and $\left\Vert T^{\ast}\right\Vert
_{c}=\left\Vert T\right\Vert _{c}$ that, for an arbitrary $T\in\mathcal{M}%
_{n}\left(  \mathbb{C}\right)  $ we have
\[
\left\Vert T\right\Vert _{c}\leq\left\Vert \operatorname{Re}T\right\Vert
_{c}+\left\Vert \operatorname{Im}T\right\Vert _{c}\leq\left(  2n+1\right)
\left[  w_{c}\left(  \operatorname{Re}T\right)  +w_{c}\left(
\operatorname{Im}T\right)  \right]  \leq
\]%
\[
\left(  2n+1\right)  \left[  w_{c}\left(  T\right)  +w_{c}\left(  T\right)
\right]  \leq\left(  4n+2\right)  w_{n}\left(  T\right)  .
\]
Hence $\kappa_{n}\geq\frac{1}{4n+2}$.
\end{proof}

\bigskip

In the preceding proposition, there was no attempt to get the best estimates
of $\kappa_{n}$. This leads to a natural question. \vspace{3mm}

\textbf{Problem 2. }What is the exact value of $\kappa_{n}$?

\bigskip

The following result relates $w$ and $w_{c}$.

\bigskip

\begin{proposition}
Suppose $A\in\mathcal{M}_{n}\left(  \mathbb{C}\right)  $. Then%
\[
w_{c}\left(  A\right)  \leq\inf_{D\in\mathcal{D}_{n,0}}w\left(  A+D\right)  .
\]

\end{proposition}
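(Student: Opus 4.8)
The plan is to deduce this directly from parts (2) and (5) of Theorem \ref{basic}, so the argument is essentially a two-line chain of inequalities once those are in hand. The key observation is that the left-hand side $w_c(A)$ does not change under perturbation by a trace-zero diagonal matrix, while the right-hand side is an infimum over exactly those perturbations; so the inequality should reduce to the pointwise statement $w_c(B)\le w(B)$ applied to each $B=A+D$.

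First I would fix $D\in\mathcal{D}_{n,0}$. Since $\tau_n(D)=0$, part (5) of Theorem \ref{basic} gives $W_c(A+D)=W_c(A)+\tau_n(D)=W_c(A)$, and hence $w_c(A+D)=w_c(A)$. Next, part (2) of Theorem \ref{basic} gives $W_c(A+D)\subseteq W(A+D)$, and taking the supremum of the modulus over these sets yields $w_c(A+D)\le w(A+D)$. Chaining these two facts gives $w_c(A)=w_c(A+D)\le w(A+D)$ for every $D\in\mathcal{D}_{n,0}$.

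Finally I would take the infimum over all $D\in\mathcal{D}_{n,0}$ on the right-hand side; since the left-hand side $w_c(A)$ is independent of $D$, this preserves the inequality and produces $w_c(A)\le\inf_{D\in\mathcal{D}_{n,0}}w(A+D)$, as desired.

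There is no real obstacle here: both ingredients are already established in Theorem \ref{basic}, and the only thing to be careful about is invoking part (5) only for $D$ with zero trace (so that the shift $\tau_n(D)$ vanishes) — which is automatic since we are ranging over $\mathcal{D}_{n,0}$. It is worth remarking in passing that one cannot in general replace the infimum by a minimum or expect equality, but neither is claimed in the statement.
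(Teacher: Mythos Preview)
Your proof is correct and is exactly the paper's argument, just written out in more detail: the paper's proof is the single line ``$w_c(A)=w_c(A+D)\le w(A+D)$ for every $D\in\mathcal{D}_{n,0}$, and the result follows,'' which is precisely your chain of inequalities using parts (2) and (5) of Theorem \ref{basic}.
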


\begin{proof}
We know that $w_{c}\left(  A\right)  =w_{c}\left(  A+D\right)  \leq w\left(
A+D\right)  $ for every $D\in\mathcal{D}_{n,0}$, and the result follows.
\end{proof}

\section{Connes' Embedding Problem; Correlation Matrices From Unitaries}

In this section we consider a special class of correlation matrices. Suppose
$k$ is a positive integer. We can make $\mathcal{M}_{k}\left(  \mathbb{C}%
\right)  $ into a Hilbert space with inner product
\[
\left\langle S,T\right\rangle =\tau_{k}\left(  T^{\ast}S\right)  .
\]
In this case we have that each unitary matrix $U$ in $\mathcal{M}_{k}\left(
\mathbb{C}\right)  $ is a unit vector. We say that an $n\times n$ correlation
matrix is \emph{unitarily induced} if there is a positive integer $k$ and
unitary $k\times k$ matrices $U_{1},\ldots,U_{k}$ such that%
\[
A=\left(  \tau_{n}\left(  U_{j}^{\ast}U_{i}\right)  \right)  .
\]
The set $\mathcal{F}_{n}$ of $n\times n$ unitarily induced correlation
matrices is generally not convex. This is because if $U$ and $V$ are unitary
and $s,t$ are nonzero numbers, then $sU+tV$ is unitary if and only if%
\[
\sigma\left(  s+tU^{\ast}V\right)  \subset\mathbb{T},
\]
where $\mathbb{T}=\left\{  \lambda\in\mathbb{C}:\left\vert \lambda\right\vert
=1\right\}  $; equivalently if
\[
\sigma\left(  U^{\ast}T\right)  \subset\frac{1}{t}\mathbb{T}-\frac{s}{t}.
\]
Details of this are worked out in \cite{DJ}. If $T\in\mathcal{M}_{n}\left(
\mathbb{C}\right)  $, we define%
\begin{align*}
W_{uc}\left(  T\right)   &  =co\left(  \left\{  \tau_{n}\left(  TA\right)
:A\in\mathcal{F}_{n}\right\}  \right)  =\\
&  \left\{  \tau_{n}\left(  TB\right)  :B\in co\mathcal{F}_{n}\right\}  .
\end{align*}
Since $\mathcal{F}_{n}\subset\mathcal{E}_{n}$, it is clear that $W_{uc}\left(
T\right)  \subset W_{c}\left(  T\right)  .$ It was proved in \cite{LS} (later
in \cite{DJ}) that $\mathcal{E}_{n}$ is the convex hull of $\mathcal{F}_{n}$
if and only if $n\leq3$.

\bigskip

\begin{lemma}
Suppose $n$ is a positive integer. Then

\begin{enumerate}
\item If $n\leq3$ and $T\in\mathcal{M}_{n}\left(  \mathbb{C}\right)  $, then%
\[
W_{c}\left(  T\right)  =W_{uc}\left(  T\right)  .
\]

\item If $n>3$, then there is a $T=T^{\ast}$ in $M_{n}\left(  \mathbb{C}%
\right)  $ such that%
\[
W_{uc}\left(  T\right)  \neq W_{c}\left(  T\right)  .
\]

\end{enumerate}
\end{lemma}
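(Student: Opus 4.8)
The plan is to reduce both parts to the cited fact that $\mathrm{co}\,\mathcal{F}_n = \mathcal{E}_n$ iff $n\le 3$, using the duality between sets of matrices and linear functionals.

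For part (1), since $n\le 3$ gives $\mathrm{co}\,\mathcal{F}_n = \mathcal{E}_n$, I would simply write
\[
W_{uc}(T)=\{\tau_n(TB):B\in\mathrm{co}\,\mathcal{F}_n\}=\{\tau_n(TB):B\in\mathcal{E}_n\}=W_c(T),
\]
where the last equality is part (1) of Theorem \ref{basic}. This is immediate once one unwinds the definitions, so the only content is the citation.

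For part (2), the strategy is a separation argument. When $n>3$ the convex set $\mathrm{co}\,\mathcal{F}_n$ is a proper closed convex subset of the convex set $\mathcal{E}_n$ (both are compact, $\mathcal F_n$ being the continuous image of a compact set of unitary tuples once one fixes $k$—actually one must be slightly careful here, so I would instead invoke that $\mathrm{co}\,\mathcal{F}_n$ is closed, as established in \cite{DJ} or \cite{LS}, which is exactly what makes the statement $\mathrm{co}\,\mathcal{F}_n\ne\mathcal{E}_n$ meaningful). Pick $B_0\in\mathcal{E}_n\setminus\mathrm{co}\,\mathcal{F}_n$. By the Hahn–Banach separation theorem applied to the real inner product $\langle X,Y\rangle_{\mathbb{R}}=\operatorname{Re}\tau_n(XY^{*})$ on $\mathcal{M}_n(\mathbb{C})$, there is a real-linear functional, hence a matrix $S$, with $\operatorname{Re}\tau_n(SB_0^{*})>\sup_{A\in\mathcal{F}_n}\operatorname{Re}\tau_n(SA^{*})$. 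Writing $T=\tfrac12(S^{*}+S)$ replaced appropriately—one needs $T=T^{*}$—I would set $T=\operatorname{Re}(S^{*})$ or similar and use that correlation matrices satisfy $\bar B = B^{T}$ and $\tau_n(TA)=\overline{\tau_n(TA^{*})}$ when $T=T^{*}$, so that $\operatorname{Re}\tau_n(TA)$ is the relevant quantity; then $\tau_n(TB_0^{T})$ lies strictly outside $\overline{W_{uc}(T)}$, which by compactness (Theorem \ref{basic}(8)-style arguments) equals $W_{uc}(T)$. Hence $W_{uc}(T)\subsetneq W_c(T)$.

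The main obstacle is the bookkeeping in the separation step: one separates in the real vector space $\mathcal{M}_n(\mathbb{C})$, obtains an arbitrary (not necessarily Hermitian) separating matrix, and must massage it into a Hermitian $T$ while keeping the separation of the relevant real parts. The key identity making this work is that for $B\in\mathcal{E}_n$ one has $\overline{B}=B^{T}$, together with the invariance $W_c(T^{T})=W_c(T)$ and the analogous invariance for $W_{uc}$ (which follows since $\mathcal F_n$ is closed under transpose, as $\tau_n(U_j^{*}U_i)^{T}=(\tau_n(U_i^{*}U_j))=(\tau_n(\overline{U_j}^{*}\overline{U_i}))$). Once this symmetry is in hand, replacing the separating functional by its "Hermitian part" is routine and one concludes $T=T^{*}$ can be chosen, completing the proof.
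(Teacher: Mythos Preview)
Your proposal is correct and follows essentially the same route as the paper. Part (1) is identical. For part (2) both you and the paper use Hahn--Banach to separate some $B_0\in\mathcal{E}_n\setminus\mathrm{co}\,\mathcal{F}_n$ from $\mathrm{co}\,\mathcal{F}_n$ and then pass to a Hermitian $T$ by taking the real part of the separating matrix. The only difference is in how that last step is justified: you invoke the symmetries $\bar B=B^{T}$ and transpose-invariance of $\mathcal{E}_n,\mathcal{F}_n$, which is correct but circuitous; the paper simply notes that since every correlation matrix $S$ is Hermitian (indeed $\ge 0$), one has $\operatorname{Re}\tau_n(SK)=\tau_n(S\operatorname{Re}K)$ directly, so setting $T=\operatorname{Re}K$ preserves the strict separation with no further bookkeeping. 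Your flag about the closedness of $\mathrm{co}\,\mathcal{F}_n$ is a fair technical point that the paper leaves implicit in its citation of \cite{LS}.
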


\begin{proof}
$\left(  1\right)  .$ This follows from the fact \cite{LS} that $\mathcal{E}%
_{n}=co\mathcal{F}_{n}$ when $n\leq3$.

$\left(  2\right)  $. Suppose $n>3$. It follows from \cite{LS} that there is a
$B\in\mathcal{E}_{n}$ such that $B\notin co\mathcal{F}_{n}$. It follows from
the Hahn Banach theorem that there is a continuous linear functional $\varphi$
on $\mathcal{M}_{n}\left(  \mathbb{C}\right)  $ and a real number $t$ such
that%
\[
\operatorname{Re}\varphi\left(  A\right)  <t<\varphi\left(  B\right)
\]
for every $A\in co\mathcal{F}_{n}$. We know that there is a $K\in
\mathcal{M}_{n}\left(  \mathbb{C}\right)  $ such that%
\[
\varphi\left(  S\right)  =\tau_{n}\left(  SK\right)
\]
for every $S\in\mathcal{M}_{n}\left(  \mathbb{C}\right)  $. If $S\geq0$, then%
\[
\operatorname{Re}\varphi\left(  S\right)  =\operatorname{Re}\tau_{n}\left(
S^{\frac{1}{2}}KS^{\frac{1}{2}}\right)  =\tau_{n}\left(  S\operatorname{Re}%
K\right)  .
\]
Hence, if $T=\operatorname{Re}K$, we have%
\[
W_{uc}\left(  T\right)  \subset(-\infty.t)\text{ and }t<\tau_{n}(TB)\in
W_{c}\left(  T\right)  .
\]
Hence $W_{uc}\left(  T\right)  \neq W_{c}\left(  T\right)  $.
\end{proof}

\bigskip

We now relate $W_{uc}\left(  T\right)  $ to Connes' famous embedding problem
\cite{C}, which asks if every finite von Neumann algebra can be tracially
embedded in an ultraproduct of an ultrapower of the hyperfinite $II_{1}$
factor. Fortunately, the reader does not need to know the meaning of any of
the terms in the preceding sentence, because of a lovely reformulation
\cite{JP} of Connes' embedding problem in terms of matrices. This
reformulation, which is an extension of results in \cite{H} and \cite{KS} ,
was further studied in \cite{DJ}.

Let $\mathbb{F}_{n}$ denote the free group on $n$ generators $\left\{
u_{1},\ldots,u_{n}\right\}  $, and let $\mathcal{A}_{n}$ denote the group
algebra of $\mathbb{F}_{n}$. The definition $u^{\ast}=u^{-1}$ on
$\mathbb{F}_{n}$ extends to an involution $\ast$ on $\mathcal{A}_{n}$.

If $T=\left(  \alpha_{ij}\right)  \in\mathcal{M}_{n}\left(  \mathbb{C}\right)
$, we define an element $p_{T}\left(  u_{1},\ldots,u_{n}\right)
\in\mathcal{A}_{n}$ by%
\[
p_{T}\left(  u_{1},\ldots,u_{n}\right)  =\sum_{i,j=1}^{n}\alpha_{ij}%
u_{i}^{\ast}u_{j}.
\]
Let $\mathcal{P}_{n}$ be the set of elements of $\mathcal{A}_{n}$ that can be
written in the form%
\[
\sum_{j=1}^{m}q_{j}^{\ast}q_{j}+\sum_{j=1}^{m}\left(  f_{j}g_{j}-g_{j}%
f_{j}\right)  .
\]
elements $q_{1},\ldots,q_{m},f_{1},g_{1},\ldots f_{m},g_{m}\in\mathcal{A}_{n}$.

The revised version of Connes' embedding problem in \cite{JP} is equivalent to
the following:

For every positive integer $n$, and every selfadjoint $A$ in $\mathcal{M}%
_{n}\left(  \mathbb{C}\right)  $, if $W_{uc}\left(  A\right)  \subset
(0,\infty)$, then $p_{A}\in\mathcal{P}_{n}$.

\bigskip

This question was answered affirmatively by Popovych \cite{JP} when $n=3.$

\bigskip

Note that $\mathcal{P}_{n}$ is closed under addition and multiplication by
nonnegative scalars and that the set of $A=A^{\ast}$ in $M_{n}\left(
\mathbb{C}\right)  $ such that $W_{uc}\left(  \left(  0,\infty\right)
\right)  $ is also closed under addition and multiplication by nonnegative
scalars. Note that the map $T\rightarrow p_{T}$ from $\mathcal{M}_{n}\left(
\mathbb{C}\right)  $ to $\mathcal{A}_{n}$ is linear. Here is one simple observation.

\bigskip

\begin{lemma}
Suppose $A\in\mathcal{M}_{n}\left(  \mathbb{C}\right)  $ and $A\geq0$. Then
$p_{A}\in\mathcal{P}_{n}$.
\end{lemma}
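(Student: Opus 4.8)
The plan is to write $A\ge 0$ as $A = V^\ast V$ for some $V\in\mathcal{M}_n(\mathbb{C})$, say with columns $v_1,\dots,v_n\in\mathbb{C}^n$, so that $\alpha_{ij} = \langle v_j, v_i\rangle$ (reading off the $(i,j)$ entry of $V^\ast V$). The goal is to exhibit $p_A(u_1,\dots,u_n)=\sum_{i,j}\alpha_{ij}u_i^\ast u_j$ as an element of $\mathcal{P}_n$, i.e.\ as a sum of terms $q^\ast q$ plus commutators. The natural candidate is $q = \sum_{k=1}^n c_k\, u_k$ for suitable scalars $c_k$, or more precisely a family of such elements indexed so that the coefficients reproduce the Gram matrix $(\langle v_j,v_i\rangle)$.

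First I would compute, for $q=\sum_k c_k u_k$, the product
\[
q^\ast q = \Big(\sum_k \bar c_k u_k^\ast\Big)\Big(\sum_\ell c_\ell u_\ell\Big) = \sum_{k,\ell}\bar c_k c_\ell\, u_k^\ast u_\ell .
\]
Summing over an orthonormal-type family: if we let $q_m = \sum_{k} \overline{(v_m)_k}\, u_k$ where $(v_m)_k$ denotes the $k$-th coordinate of $v_m$ — wait, I must be careful to match indices. The cleaner route: write $A = \sum_{m} w_m w_m^\ast$ as a sum of rank-one positive matrices (spectral decomposition, $w_m$ the scaled eigenvectors), so $\alpha_{ij} = \sum_m (w_m)_i \overline{(w_m)_j}$. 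Then set $q_m = \sum_{j=1}^n \overline{(w_m)_j}\, u_j \in \mathcal{A}_n$, and compute
\[
\sum_m q_m^\ast q_m = \sum_m \sum_{i,j} (w_m)_i \overline{(w_m)_j}\, u_i^\ast u_j = \sum_{i,j}\alpha_{ij}\, u_i^\ast u_j = p_A .
\]
Thus $p_A$ is literally a sum of terms of the form $q^\ast q$, with no commutators needed at all. Since $\mathcal{P}_n$ contains all such sums (take the $f_j,g_j$ all zero), $p_A\in\mathcal{P}_n$.

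I do not expect a genuine obstacle here; the only thing to verify carefully is the index bookkeeping in the computation $\sum_m q_m^\ast q_m = p_A$, namely that conjugation lands on the correct factor so that the coefficient of $u_i^\ast u_j$ comes out as $\sum_m (w_m)_i\overline{(w_m)_j} = \alpha_{ij}$ rather than its conjugate or transpose. This is just a matter of choosing the convention for $q_m$ consistently with the definition $\alpha_{ij}=\langle v_j,v_i\rangle$ implicit in $A\ge 0$; a one-line check settles it. The rank-one decomposition of $A\ge 0$ is elementary (spectral theorem), so the whole proof is short.
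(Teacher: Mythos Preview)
Your proof is correct and is essentially the same as the paper's: both write $A\ge 0$ as a sum of rank-one positives $w_m w_m^\ast$ and observe that the corresponding $p_{w_m w_m^\ast}$ is a single hermitian square $q_m^\ast q_m$ with $q_m=\sum_j\overline{(w_m)_j}\,u_j$, so $p_A=\sum_m q_m^\ast q_m\in\mathcal{P}_n$. The only difference is cosmetic---the paper first reduces to the rank-one case by linearity and then checks that case, while you carry the sum over $m$ throughout.
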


\begin{proof}
Every nonnegative $A$ is a sum of rank-one nonnegative matrices. It follows
from the remarks preceding this lemma that we can assume that $rankA=1,$ which
means that $A$ can be written as
\[
A=\left(  \beta_{1},\ldots,\beta_{n}\right)  \left(  \beta_{1},\ldots
,\beta_{n}\right)  ^{\ast}=\left(  \beta_{i}\bar{\beta}_{j}\right)  .
\]
Then
\[
p_{A}=\sum_{i,j=1}^{n}\beta_{i}\bar{\beta}_{j}u_{i}^{\ast}u_{j}=\left(
\sum_{i=1}^{n}\bar{\beta}_{i}u_{i}\right)  ^{\ast}\left(  \sum_{i=1}^{n}%
\bar{\beta}_{i}u_{i}\right)  \in\mathcal{P}_{n}.
\]

\end{proof}

\begin{remark}
One more observation is that $p_{S}=p_{T}$ if and only if $S-T\in
\mathcal{D}_{n,0}$. The the map $T\mapsto p_{T}$ is really a function on
$\mathcal{M}_{n}\left(  \mathbb{C}\right)  /\mathcal{D}_{n,0}$. Thus if $A$ is
the sum of a positive semidefinite matrix and a zero-trace diagonal matrix,
then $p_{A}\in\mathcal{P}_{n}$. This means that Problem 1 is related to this scenario.
\end{remark}

\bigskip

An affirmative answer to Problem 1 yields an affirmative answer to the
following problem. If Connes' embedding problem has an affirmative answer,
then so must the following problem. Since the set $\mathcal{F}_{n}$ of
correlation matrices has such a simple definition, this question should be
easier to resolve.

\textbf{Problem 3. }If $W_{c}\left(  A\right)  \subset\left(  0,\infty\right)
$, the must $p_{A}\in\mathcal{P}_{n}$?

\section{The Case $n=2$.}

All of the questions can be answered when $n=2$. A complete description of
$W\left(  T\right)  $ when $T$ is a $2\times2$ matrix is given in \cite{PRH}.

\begin{lemma}
\label{2x2}If $T=\left(
\begin{array}
[c]{cc}%
a & b\\
c & d
\end{array}
\right)  ,$ then%
\[
W_{c}\left(  T\right)  =W_{uc}\left(  T\right)  =\frac{a+d}{2}+W\left(
\left(
\begin{array}
[c]{cc}%
0 & b\\
c & 0
\end{array}
\right)  \right)  =W\left(  \left(
\begin{array}
[c]{cc}%
\frac{a+d}{2} & b\\
c & \frac{a+d}{2}%
\end{array}
\right)  \right)  ,
\]
and $W\left(  \left(
\begin{array}
[c]{cc}%
0 & b\\
c & 0
\end{array}
\right)  \right)  $ is

\begin{enumerate}
\item the disk centered at $0$ with radius $\frac{1}{2}\max\left(  \left\vert
b\right\vert ,\left\vert c\right\vert \right)  $ if $bc=0$.

\item the segment from $-\sqrt{bc}$ to $\sqrt{bc}$ if $\left\vert b\right\vert
=\left\vert c\right\vert ,$ and

\item the elliptical disk with foci $\pm\sqrt{bc}$ if $bc\neq0$ and
$\left\vert b\right\vert \neq\left\vert c\right\vert .$
\end{enumerate}
\end{lemma}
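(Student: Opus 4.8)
The plan is to reduce everything to the classical description of the numerical range of a $2\times 2$ matrix, which is the Elliptical Range Theorem cited as \cite{PRH}. First I would establish the chain of equalities. By Lemma \ref{2x2}'s analogue for $n=2$ -- or rather by part (1) of the previous Lemma, since $n=2\leq 3$ -- we have $W_c(T)=W_{uc}(T)$, so that equality is free. For the remaining identities I would use part (5) of Theorem \ref{basic}: writing $T=\left(\begin{smallmatrix}0 & b\\ c & 0\end{smallmatrix}\right)+D$ with $D=\mathrm{diag}(a,d)$, we get $W_c(T)=W_c\left(\left(\begin{smallmatrix}0 & b\\ c & 0\end{smallmatrix}\right)\right)+\tau_2(D)=W_c\left(\left(\begin{smallmatrix}0 & b\\ c & 0\end{smallmatrix}\right)\right)+\tfrac{a+d}{2}$. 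So it suffices to show $W_c(N)=W(N)$ for the off-diagonal matrix $N=\left(\begin{smallmatrix}0 & b\\ c & 0\end{smallmatrix}\right)$, since translating $W(N)$ by $\tfrac{a+d}{2}$ gives exactly $W\left(\left(\begin{smallmatrix}(a+d)/2 & b\\ c & (a+d)/2\end{smallmatrix}\right)\right)$ (the numerical range is translation-equivariant under scalar matrices).

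Next I would prove $W_c(N)=W(N)$ directly from the definition. By part (2) of Theorem \ref{basic}, $W_c(N)\subseteq W(N)$, so only the reverse inclusion needs work. For $n=2$ the set $E$ consists of vectors $\tfrac{1}{\sqrt 2}(e_1,e_2)$ with $\|e_1\|=\|e_2\|=1$ in $\mathbb{C}^2$, and a short computation gives $\langle \hat N e,e\rangle = \tfrac{1}{2}\big(b\langle e_1,e_2\rangle + c\langle e_2,e_1\rangle\big)= \tfrac{1}{2}\big(b\bar\zeta + c\zeta\big)$ where $\zeta=\langle e_2,e_1\rangle$ ranges over the closed unit disk $\overline{\mathbb{D}}$ as $e_1,e_2$ range over unit vectors (every $\zeta$ with $|\zeta|\le 1$ arises since it is a correlation-matrix off-diagonal entry). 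So $W_c(N)=\{\tfrac{1}{2}(b\bar\zeta+c\zeta):|\zeta|\le 1\}$. On the other hand the Elliptical Range Theorem says $W(N)$ is the ellipse (possibly degenerate) with foci at the eigenvalues $\pm\sqrt{bc}$ and minor axis determined by $\|N\|_{HS}$; one checks that $\{\tfrac12(b\bar\zeta+c\zeta):|\zeta|\le1\}$ is precisely this filled ellipse by writing $\zeta=re^{i\theta}$ and tracing the boundary $r=1$, recovering cases (1), (2), (3) according to whether $bc=0$, $|b|=|c|\ne 0$, or $|b|\ne|c|$ and $bc\ne 0$. This simultaneously verifies the three-way case split in the statement.

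The main obstacle I anticipate is purely bookkeeping rather than conceptual: carefully checking that the image $\{\tfrac12(b\bar\zeta+c\zeta):|\zeta|\le1\}$ is a \emph{filled} ellipse (or segment, or disk) with the stated foci, and matching the degenerate cases cleanly. The map $\zeta\mapsto \tfrac12(b\bar\zeta+c\zeta)$ is $\mathbb{R}$-linear from $\mathbb{C}\cong\mathbb{R}^2$ to $\mathbb{C}$; its image of the unit disk is an ellipse centered at the origin, and one must identify its axes. Writing $b=|b|e^{i\beta}$, $c=|c|e^{i\gamma}$ and substituting $\zeta=e^{i((\gamma-\beta)/2)}w$ symmetrizes the expression to $\tfrac12 e^{i(\beta+\gamma)/2}(|b|\bar w+|c|w)$, reducing to real coefficients; then the semi-axes are $\tfrac12(|b|+|c|)$ and $\tfrac12\big||b|-|c|\big|$, the foci are at $\pm\tfrac12\sqrt{(|b|+|c|)^2-(|b|-|c|)^2}\,e^{i(\beta+\gamma)/2}=\pm\sqrt{|bc|}\,e^{i(\beta+\gamma)/2}=\pm\sqrt{bc}$, and the three cases fall out according to whether one semi-axis is zero. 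The final step is to note this agrees with $W(N)$ as given by \cite{PRH}, completing the proof.
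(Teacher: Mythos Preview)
The paper states this lemma without proof, presumably regarding it as an immediate consequence of Theorem~\ref{basic}, the preceding lemma on $W_{uc}$ for $n\le 3$, and the classical Elliptical Range Theorem from \cite{PRH}. Your proposal supplies exactly those details and is correct in outline and in substance: the reduction via part~(5) of Theorem~\ref{basic} to the off-diagonal matrix $N$, the identification of $W_c(N)$ with $\{\tfrac12(b\bar\zeta+c\zeta):|\zeta|\le1\}$ using the parametrization of $2\times2$ correlation matrices, and the real-linear analysis showing this image is the filled ellipse with semi-axes $\tfrac12(|b|+|c|)$, $\tfrac12\big||b|-|c|\big|$ and foci $\pm\sqrt{bc}$ all go through.

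Two small bookkeeping points, neither of which damages the argument. First, depending on which variable in $\langle\cdot,\cdot\rangle$ you take to be linear, the direct computation of $\langle\hat N e,e\rangle$ gives $\tfrac12(b\zeta+c\bar\zeta)$ rather than $\tfrac12(b\bar\zeta+c\zeta)$; since the unit disk is conjugation-invariant the resulting set is the same. Second, in your symmetrizing substitution the phase should be $\zeta=e^{i(\beta-\gamma)/2}w$ (or the conjugate, according to which version of the expression you use) to make both exponentials equal to $e^{i(\beta+\gamma)/2}$; with the sign as written the two phases do not match. After fixing that sign the rest of your ellipse computation is correct and recovers the three cases exactly as stated.
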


\bigskip

The following result give the answers to most of the questions in the
preceding question when $n=2$.

\bigskip

\begin{corollary}
Suppose $T=\left(
\begin{array}
[c]{cc}%
a & b\\
c & d
\end{array}
\right)  \in\mathcal{M}_{2}\left(  \mathbb{C}\right)  $. If $W_{uc}\left(
T\right)  =W_{c}\left(  T\right)  \subseteq\lbrack0,\infty)$, then $T$ is the
sum of a trace-zero diagonal matrix $\left(
\begin{array}
[c]{cc}%
a & 0\\
0 & d
\end{array}
\right)  -\tau_{2}\left(  T\right)  I_{2}$ and a positive semidefinite matrix
$\left(
\begin{array}
[c]{cc}%
\frac{a+d}{2} & b\\
c & \frac{a+d}{2}%
\end{array}
\right)  $. So $p_{T}\in\mathcal{P}_{2}$.
\end{corollary}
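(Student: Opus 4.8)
The plan is to notice first that the decomposition asserted in the statement is essentially formal, so that the only substantive point is positivity of the second summand. For any $T=\left(\begin{smallmatrix}a&b\\c&d\end{smallmatrix}\right)$ we may write $T=D+P$, where $D=\left(\begin{smallmatrix}a&0\\0&d\end{smallmatrix}\right)-\tau_2(T)I_2$ and $P=\left(\begin{smallmatrix}\tau_2(T)&b\\c&\tau_2(T)\end{smallmatrix}\right)$ with $\tau_2(T)=\tfrac{a+d}{2}$; then $D$ is diagonal with trace $(a+d)-2\tau_2(T)=0$, so $D\in\mathcal D_{2,0}$. Hence the corollary reduces to showing that the hypothesis $W_c(T)=W_{uc}(T)\subseteq[0,\infty)$ forces $P\ge 0$. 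By Lemma \ref{2x2}, $W_c(T)=W(P)$, so what must be shown is simply that $W(P)\subseteq[0,\infty)$ implies $P\ge 0$.

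For this I would invoke the standard fact that, for any square matrix $M$, $W(M)\subseteq[0,\infty)$ if and only if $M\ge 0$: if $W(M)\subseteq\mathbb R$ then $\langle(\operatorname{Im}M)x,x\rangle=\operatorname{Im}\langle Mx,x\rangle=0$ for all $x$, so $\operatorname{Im}M=0$ and $M=M^\ast$; a Hermitian matrix is normal, so $W(M)$ is the convex hull of its (real) spectrum, and $W(M)\subseteq[0,\infty)$ then says exactly that every eigenvalue of $M$ is nonnegative. Applied to $M=P$ this gives $P\ge 0$, and in particular forces $c=\bar b$, which is what makes $P$ Hermitian. Alternatively one can read the same conclusion straight off the three cases of Lemma \ref{2x2}: a genuine elliptical disk is never contained in $\mathbb R$, so case (3) is excluded; in case (1) the disk must degenerate to a point, forcing $b=c=0$; and in case (2) containment of the segment $\tfrac{a+d}{2}+[-\sqrt{bc},\sqrt{bc}]$ in $[0,\infty)$ forces $\tfrac{a+d}{2}\in\mathbb R$, $bc\ge 0$ and $\tfrac{a+d}{2}\ge\sqrt{bc}$, which together with $|b|=|c|$ yields $c=\bar b$ and $\det P=(\tfrac{a+d}{2})^2-|b|^2\ge 0$, whence $P\ge 0$.

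To finish, with $T=D+P$, $D\in\mathcal D_{2,0}$, and $P\ge 0$ established, I would apply the earlier remark that $p_S=p_T$ whenever $S-T\in\mathcal D_{n,0}$, together with the lemma that $A\ge 0$ implies $p_A\in\mathcal P_n$: since $T-P=D\in\mathcal D_{2,0}$ we conclude $p_T=p_P\in\mathcal P_2$, as claimed.

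I do not expect a real obstacle; the corollary is bookkeeping on top of the detailed $2\times 2$ analysis in Lemma \ref{2x2}. The only step that deserves a moment's care is the implication $W(P)\subseteq[0,\infty)\Rightarrow P\ge 0$, in particular the observation that the hypothesis already pins down $c=\bar b$ (without which $P$ could not be positive semidefinite); this is immediate from the general numerical-range fact and can equally well be verified by the case analysis above.
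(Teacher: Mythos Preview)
Your proposal is correct and matches the paper's intended argument: the paper states this as a corollary with no explicit proof, relying on Lemma \ref{2x2} to identify $W_c(T)$ with $W(P)$ for $P=\left(\begin{smallmatrix}\frac{a+d}{2}&b\\c&\frac{a+d}{2}\end{smallmatrix}\right)$, after which $W(P)\subseteq[0,\infty)\Rightarrow P\ge 0$ and the earlier Lemma and Remark give $p_T\in\mathcal P_2$. Your write-up simply spells out these steps (including the care needed to see $c=\bar b$), which is exactly what the paper leaves implicit.
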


\bigskip

A well known result of V. Pelligrini \cite{Pel} and a result of R. Kadison
\cite{Kad} implies that a linear numerical range-preserving map $\varphi
:\mathcal{M}_{n}\left(  \mathbb{C}\right)  \rightarrow\mathcal{M}_{n}\left(
\mathbb{C}\right)  $ has the form%
\[
\varphi\left(  S\right)  =U^{\ast}SU
\]
or
\[
\varphi\left(  S\right)  =U^{\ast}S^{T}U
\]
for some unitary matrix $U$. See \cite{LS} for more general results.

\bigskip

\begin{theorem}
Suppose $\varphi:\mathcal{M}_{2}\left(  \mathbb{C}\right)  \rightarrow
\mathcal{M}_{2}\left(  \mathbb{C}\right)  $ is linear. The following are equivalent.

\begin{enumerate}
\item For every $T\in\mathcal{M}_{2}\left(  \mathbb{C}\right)  $ we have
$W_{c}\left(  \varphi\left(  T\right)  \right)  =W_{c}\left(  T\right)  .$

\item There is a linear map $\alpha:\mathcal{M}_{2}\left(  \mathbb{C}\right)
\rightarrow\mathcal{D}_{2,0}$ and a unitary $U\in\mathcal{D}_{2}$ such that%
\[
\varphi\left(  T\right)  =U^{\ast}TU+\alpha\left(  T\right)
\]

\end{enumerate}
\end{theorem}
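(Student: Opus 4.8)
The plan is to prove $(2)\Rightarrow(1)$ directly and then carry out the real work on $(1)\Rightarrow(2)$. For $(2)\Rightarrow(1)$: if $\varphi(T)=U^{\ast}TU+\alpha(T)$ with $U\in\mathcal{D}_{2}$ unitary and $\alpha(T)\in\mathcal{D}_{2,0}$, then $U\in\mathcal{G}_{2}$, so $W_{c}\left(U^{\ast}TU\right)=W_{c}\left(T\right)$ by Proposition~\ref{normalizer}; since $\tau_{2}\left(\alpha(T)\right)=0$, part (5) of Theorem~\ref{basic} gives $W_{c}\left(\varphi(T)\right)=W_{c}\left(U^{\ast}TU\right)+\tau_{2}\left(\alpha(T)\right)=W_{c}\left(T\right)$.

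For $(1)\Rightarrow(2)$ I would first extract structure from the hypothesis $W_{c}\circ\varphi=W_{c}$. By part (6) of Theorem~\ref{basic}, $W_{c}(T)$ is a singleton exactly when $T$ is diagonal, so $\varphi\left(\mathcal{D}_{2}\right)\subseteq\mathcal{D}_{2}$, and for $D\in\mathcal{D}_{2}$ the equality of singletons $W_{c}\left(\varphi(D)\right)=W_{c}\left(D\right)=\{\tau_{2}(D)\}$ forces $\tau_{2}\left(\varphi(D)\right)=\tau_{2}(D)$. Write a general $T\in\mathcal{M}_{2}\left(\mathbb{C}\right)$ with diagonal entries $a,d$ and off-diagonal entries $b,c$, and let $N_{T}$ denote its off-diagonal part. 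By Lemma~\ref{2x2}, $W_{c}(T)=\tau_{2}(T)+W\left(N_{T}\right)$, and in each of the three cases of that lemma $W\left(N_{T}\right)$ is symmetric about $0$; since a nonempty compact planar set has at most one center of symmetry, comparing centers in $W_{c}\left(\varphi(T)\right)=W_{c}(T)$ forces $\tau_{2}\left(\varphi(T)\right)=\tau_{2}(T)$ for every $T$, and hence $W\left(N_{\varphi(T)}\right)=W_{c}\left(\varphi(T)\right)-\tau_{2}\left(\varphi(T)\right)=W\left(N_{T}\right)$. Since $\varphi$ carries diagonals to diagonals, the off-diagonal part of $\varphi(T)$ depends only on $N_{T}$ and linearly, so there is a fixed $V=\left(v_{ij}\right)\in\mathcal{M}_{2}\left(\mathbb{C}\right)$ with $b'=v_{11}b+v_{12}c$ and $c'=v_{21}b+v_{22}c$, where $b',c'$ are the off-diagonal entries of $\varphi(T)$.

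The crux will be to convert $W\left(N_{\varphi(T)}\right)=W\left(N_{T}\right)$ for all $b,c$ into equations for $V$. The numerical range of the matrix with zero diagonal and off-diagonal entries $b,c$ both determines and is determined by the pair $\left(bc,\ |b|^{2}+|c|^{2}\right)$ (by the elliptical range theorem, or a direct reading of Lemma~\ref{2x2}), so the identity is equivalent to $b'c'=bc$ and $|b'|^{2}+|c'|^{2}=|b|^{2}+|c|^{2}$ for all $b,c\in\mathbb{C}$. The second says precisely that $V$ is unitary; expanding the first as a polynomial identity in $b,c$ gives $v_{11}v_{21}=0$, $v_{12}v_{22}=0$ and $v_{11}v_{22}+v_{12}v_{21}=1$. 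Together with unitarity of $V$ these leave only two possibilities: $V$ is diagonal with entries $\lambda,\bar\lambda$, $|\lambda|=1$ (so $b'=\lambda b$, $c'=\bar\lambda c$), or $V$ is anti-diagonal with entries $\lambda,\bar\lambda$, $|\lambda|=1$ (so $b'=\lambda c$, $c'=\bar\lambda b$). In the first case, let $U\in\mathcal{D}_{2}$ be the diagonal unitary with diagonal entries $1,\lambda$; then $U^{\ast}TU$ has the same diagonal as $T$ and off-diagonal entries $\lambda b,\bar\lambda c$, so $\alpha(T):=\varphi(T)-U^{\ast}TU$ has vanishing off-diagonal part, is diagonal, and has trace $\tau_{2}\left(\varphi(T)\right)-\tau_{2}(T)=0$. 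Thus $\alpha:\mathcal{M}_{2}\left(\mathbb{C}\right)\to\mathcal{D}_{2,0}$ is linear and $\varphi(T)=U^{\ast}TU+\alpha(T)$, which is (2).

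The step I expect to be the main obstacle is the second, anti-diagonal case $b'=\lambda c$, $c'=\bar\lambda b$. The same bookkeeping there produces $\varphi(T)=U^{\ast}T^{T}U+\alpha(T)$ with $\alpha$ again valued in $\mathcal{D}_{2,0}$, and by part (9) of Theorem~\ref{basic} this map also satisfies (1), while it is not of the form $U^{\ast}TU+\alpha(T)$ because $T\mapsto T^{T}$ is not inner on $\mathcal{M}_{2}\left(\mathbb{C}\right)$. So either the conclusion (2) must be read disjunctively as ``$\varphi(T)=U^{\ast}TU+\alpha(T)$ or $\varphi(T)=U^{\ast}T^{T}U+\alpha(T)$'', in line with the Pellegrini--Kadison description recalled just before the theorem, or an extra hypothesis is needed to exclude the transposing branch; deciding this, together with a tidy treatment of the degenerate (segment and disk) subcases when matching numerical ranges, is where the care lies. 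The remaining pieces --- the polynomial identity, the unitarity of $V$, and the construction of $\alpha$ --- are routine once this reduction is in hand.
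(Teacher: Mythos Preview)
Your argument is correct and runs parallel to the paper's, though organized differently. The paper works with the matrix units: it first observes from part~(7) of Theorem~\ref{basic} that $\varphi(T^{\ast})-\varphi(T)^{\ast}\in\mathcal{D}_{2,0}$, then computes $W_{c}(\varphi(e_{12}))$ to find $\varphi(e_{12})\in\{\lambda e_{12},\lambda e_{21}\}+\mathcal{D}_{2,0}$, and uses the $\ast$-relation to pin down $\varphi(e_{21})$. Your route via the invariants $(bc,\,|b|^{2}+|c|^{2})$ of $W(N_{T})$ and the resulting $2\times2$ matrix $V$ is a clean substitute that reaches the same dichotomy (diagonal $V$ versus anti-diagonal $V$) without the $\ast$-trick.

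On the ``main obstacle'' you flag: your instinct is right, but the resolution is simpler than you suggest and is exactly what the paper does in its Case~2. In $\mathcal{M}_{2}(\mathbb{C})$ the transpose is inner \emph{modulo} $\mathcal{D}_{2,0}$: with $W=\left(\begin{smallmatrix}0&1\\1&0\end{smallmatrix}\right)$ one has
\[
WTW-T^{T}=\begin{pmatrix}d-a&0\\0&a-d\end{pmatrix}\in\mathcal{D}_{2,0},
\]
so your anti-diagonal branch $\varphi(T)=U^{\ast}T^{T}U+\alpha(T)$ can be rewritten as $(WU)^{\ast}T(WU)+\beta(T)$ with $\beta$ still valued in $\mathcal{D}_{2,0}$. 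Thus your argument that ``$T\mapsto T^{T}$ is not inner'' does not rule out form~(2) once the $\alpha$-term is allowed to absorb the diagonal discrepancy. The paper carries this out by setting $\psi=\varphi(\cdot)^{T}$, applying Case~1, and then transposing back, landing on $\varphi(T)=V T V^{\ast}+\alpha(T)^{T}$ with $V=\left(\begin{smallmatrix}0&1\\\lambda&0\end{smallmatrix}\right)$. Note that this $V$ lies in $\mathcal{G}_{2}$ but not in $\mathcal{D}_{2}$; the paper's own proof thus shows that the hypothesis ``$U\in\mathcal{D}_{2}$'' in the statement should really read ``$U\in\mathcal{G}_{2}$'', which is also what your analysis points to.
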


\begin{proof}
We know that if $T=T^{\ast}$, then $W_{c}\left(  \varphi\left(  T\right)
\right)  \subset\mathbb{R}$, which implies $\operatorname{Im}\varphi\left(
T\right)  \in\mathcal{D}_{2,0}$. More generally, this implies that, for every
$T\in\mathcal{M}_{2}\left(  \mathbb{C}\right)  $, we have%
\[
\varphi\left(  T^{\ast}\right)  -\varphi\left(  T\right)  ^{\ast}%
\in\mathcal{D}_{2,0}.
\]
Let $\left\{  e_{ij}:1\leq i,j\leq2\right\}  $ be the standard matrix units
for $\mathcal{M}_{2}\left(  \mathbb{C}\right)  $. We know that $W_{c}\left(
\varphi\left(  e_{12}\right)  \right)  =W\left(  e_{12}\right)  $ is the disk
centered at $0$ with radius $1/2.$ Hence, by Lemma \ref{2x2}, there is a
$\lambda\in\mathbb{C}$ with $\left\vert \lambda\right\vert =1$ and a
$D_{12}\in\mathcal{D}_{n,0}$ such that either $\varphi\left(  e_{12}\right)
=\lambda e_{12}+D_{12}$ or $\varphi\left(  e_{12}\right)  =\lambda
e_{21}+D_{12}$.

Case 1: $\varphi\left(  e_{12}\right)  =\lambda e_{12}$. It follows from the
fact that $e_{11}\in\mathcal{D}_{2,0}$ that
\[
W_{c}\left(  \varphi\left(  e_{11}\right)  -e_{11}\right)  =W_{c}\left(
\varphi\left(  e_{11}\right)  \right)  -\tau_{2}\left(  e_{11}\right)
=\left\{  0\right\}  .
\]
Whence, $D_{11}=\varphi\left(  e_{11}\right)  -e_{11}\in\mathcal{D}_{2,0}$.
Similarly, $D_{22}=\varphi\left(  e_{22}\right)  -e_{22}\in\mathcal{D}_{2,0}$.
Also
\[
D_{21}=\varphi\left(  e_{21}\right)  -\bar{\lambda}e_{21}=\varphi\left(
e_{12}^{\ast}\right)  -\varphi\left(  e_{12}\right)  ^{\ast}\in\mathcal{D}%
_{2,0}.
\]

Define $\alpha:\mathcal{M}_{2}\left(  \mathbb{C}\right)  \rightarrow
\mathcal{D}_{2,0}$ by
\[
\alpha\left(  \left(
\begin{array}
[c]{cc}%
a & b\\
c & d
\end{array}
\right)  \right)  =aD_{11}+bD_{12}+cD_{21}+dD_{22}.
\]
Hence
\[
\varphi\left(  \left(
\begin{array}
[c]{cc}%
a & b\\
c & d
\end{array}
\right)  \right)  =\left(
\begin{array}
[c]{cc}%
a & \lambda b\\
\bar{\lambda}c & d
\end{array}
\right)  +\alpha\left(  \left(
\begin{array}
[c]{cc}%
a & b\\
c & d
\end{array}
\right)  \right)  =
\]%
\[
\left(
\begin{array}
[c]{cc}%
\lambda & 0\\
0 & 1
\end{array}
\right)  \left(
\begin{array}
[c]{cc}%
a & b\\
c & d
\end{array}
\right)  \left(
\begin{array}
[c]{cc}%
\lambda & 0\\
0 & 1
\end{array}
\right)  +\alpha\left(  \left(
\begin{array}
[c]{cc}%
a & b\\
c & d
\end{array}
\right)  \right)  .
\]

Case 2: $\varphi\left(  e_{12}\right)  =\lambda e_{21}+D_{12}.$ If we define
$\psi\left(  A\right)  =\varphi\left(  A\right)  ^{T}$, then $W_{c}\left(
\psi\left(  T\right)  \right)  =W_{c}\left(  T\right)  $ always holds and
$\psi\left(  e_{12}\right)  =\lambda e_{12}+D_{12}^{T}.$ Hence, by Case 1,
there is a linear function $\alpha:\mathcal{M}_{2}\left(  \mathbb{C}\right)
\rightarrow\mathcal{D}_{2,0}$ such that
\[
\psi\left(  \left(
\begin{array}
[c]{cc}%
a & b\\
c & d
\end{array}
\right)  \right)  =\left(
\begin{array}
[c]{cc}%
\lambda & 0\\
0 & 1
\end{array}
\right)  \left(
\begin{array}
[c]{cc}%
a & b\\
c & d
\end{array}
\right)  \left(
\begin{array}
[c]{cc}%
\lambda & 0\\
0 & 1
\end{array}
\right)  +\alpha\left(  \left(
\begin{array}
[c]{cc}%
a & b\\
c & d
\end{array}
\right)  \right)  .
\]
Hence,
\[
\varphi\left(  \left(
\begin{array}
[c]{cc}%
a & b\\
c & d
\end{array}
\right)  \right)  =\psi\left(  \left(
\begin{array}
[c]{cc}%
a & b\\
c & d
\end{array}
\right)  \right)  ^{T}=
\]%
\[
\left(
\begin{array}
[c]{cc}%
0 & 1\\
\lambda & 0
\end{array}
\right)  \left(
\begin{array}
[c]{cc}%
a & b\\
c & d
\end{array}
\right)  \left(
\begin{array}
[c]{cc}%
0 & 1\\
\lambda & 0
\end{array}
\right)  ^{\ast}+\alpha\left(  \left(
\begin{array}
[c]{cc}%
a & b\\
c & d
\end{array}
\right)  \right)  ^{T}.
\]

\end{proof}

\bigskip

\noindent\textbf{Acknowledgement.} The second author is supported by a
research grant from the National Science Foundation. \bigskip

\end{document}